\renewcommand{\phi}{\varphi}
\newcommand{\F}{{\mathbb F}}
\newtheorem{theorem}{Theorem}[section]
\newtheorem{proposition}[theorem]{Proposition}
\newtheorem{corollary}[theorem]{Corollary}
\theoremstyle{definition}
\newtheorem{remark}[theorem]{Remark}
\newtheorem{definition}[theorem]{Definition}
\def\cqfd{
{\hfill
\kern 6pt\penalty 500
\raise -1pt\hbox{\vrule\vbox to 5pt{\hrule width 4pt
\vfill\hrule}\vrule}}
\break}
\def\Frac{\mathop{\rm Frac}\nolimits}
\font\tengoth=eufm10
\font\sevengoth=eufm7
\font\fivegoth=eufm5
\title{On the maximum number of rational points on singular curves over finite fields}
\author[Toulon,Luminy]{Yves Aubry}
\address[Toulon]{Institut de Math\'ematiques de Toulon, Universit\'e de Toulon, France}
\address[Luminy]{Institut de Math\'ematiques de Marseille, CNRS-UMR 7373, Aix-Marseille Universit\'e, France}
\email{yves.aubry@univ-tln.fr}
\author[Luminy]{Annamaria Iezzi}
\address[Luminy]{Institut de Math\'ematiques de Marseille, CNRS-UMR 7373, Aix-Marseille Universit\'e, France}
\email{annamaria.iezzi@univ-amu.fr}
\begin{document} 

\begin{abstract}
We give a construction of singular curves with many rational points over finite fields. This construction enables us to prove some results on the maximum number of rational points on an absolutely irreducible  projective algebraic curve defined over ${\F}_q$ of geometric genus $g$ and arithmetic genus $\pi$.

\bigskip

Keywords: Singular curves, finite fields, rational points, zeta function.\\
MSC[2010] 14H20, 11G20, 14G15.

\end{abstract}


\maketitle

\bigskip
\begin{center}
{\it This paper is respectfully and affectionately dedicated to Mikha\"{\i}l Tsfasman and Serge Vl{\u a}du{\c t}, our colleagues and our friends.}
\end{center}
\bigskip

\section{Introduction}

Singular curves over finite fields arise naturally in many  mathematics problems. An example comes from coding theory with the geometric constructions of error correcting codes defined by the evaluation of points on algebraic varieties (see \cite{Little}). The study of hyperplane sections or more generally of sections of algebraic varieties is needed to find the fundamental parameters of these codes, and we often meet with singular varieties.
Another example arises from the theory of Boolean functions for which we have a geometric characterization of the Almost Perfect Nonlinear property   by determining whether the rational points on a certain algebraic set (which is a singular curve or a singular surface) are included in the union of hyperplanes (see \cite{A-R}).

Throughout the paper, the word \emph{curve} will always stand for an absolutely irreducible projective algebraic curve.

The zeta function of a singular curve defined over the finite field ${\mathbb F}_q$ with $q$ elements has been studied in \cite{A-P} and \cite{A-P1}. In particular,  in   \cite{A-P}  it is proved that, if $X$ is a curve defined over $\F_q$ of geometric genus $g$ and arithmetic genus $\pi$, then the number of rational points on $X$ satisfies:
$$\sharp{X}({\F}_q) \leq  q+1+gm+ \pi-g, \eqno(A)$$
where $m=[2\sqrt{q}]$ is the integer part of $2\sqrt{q}$. A curve attaining the bound $(A)$ will be called \emph{maximal}.

Fukasawa, Homma and Kim in \cite{FHK} proved that this bound is reached in the case where $g=0$ and $\pi=\frac{q^2-q}{2}$ by exhibiting a rational plane curve $B$ of degree $q+1$ with $q+1+\frac{q^2-q}{2}$ rational points.

The purpose of this paper is to study, in the general case, what is the maximum number of rational points on a singular curve. In order to do this, for $q$ a power of a prime, $g$ and $\pi$ non negative integers such that $\pi\geq g$,  we introduce  the quantity $N_q(g,\pi)$ defined as the maximum number of rational points over ${\F}_q$ on a curve defined over ${\mathbb F}_q$ of geometric genus $g$ and arithmetic genus $\pi$. We recall that  $N_q(g)$ is the usual notation for the maximum number of rational points over ${\F}_q$ on a smooth  curve defined over ${\mathbb F}_q$  of genus $g$. It follows from \cite{A-P} that:
$$N_q(g,\pi)\leq N_q(g)+\pi-g.\eqno(B)$$
A curve  defined over $\F_q$ of geometric genus $g$ and arithmetic genus $\pi$ with $N_q(g)+\pi-g$ rational points will be called $\delta$-\emph{optimal}.

Following the ideas of Rosenlicht in \cite{ROS} and Serre in Chap. IV of \cite{GACC}, we give in Theorem \ref{singu} a construction of singular curves defined over ${\mathbb F}_q$  with prescribed rational singularities which enables us to control the arithmetic genus.

Then we use this construction to  prove (Theorem \ref{principal}) that, if $X$ is a smooth curve of genus $g$ defined over $\mathbb F_q$ and if $\pi$ is an integer of the form
$$\pi=g+a_2+2a_3+3a_4+\cdots+(n-1)a_n$$
with $0\leq a_i\leq B_i(X)$, where $B_i(X)$ is the number of closed points of degree $i$ on the curve $X$, then there exists a curve $X'$ over $\mathbb F_q$ of arithmetic genus $\pi$ such that $X$ is the normalization of $X'$  and
$$\sharp{X'}(\F_q)=\sharp{X}(\F_q)+a_2+a_3+a_4+\cdots+a_n.$$

This allows us to show (Theorem \ref{iff}):
$$N_q(g,\pi)= N_q(g)+\pi-g$$
if and only if $g\leq \pi \leq g+B_2({\mathcal X}_q(g))$, where $B_2({\mathcal X}_q(g))$ denotes the maximum number of points of degree 2 on a smooth curve having $N_q(g)$ rational points over $\F_q$.
In particular, for $g=0$, it implies that the bound $(A)$ is reached if and only if $0\leq \pi\leq \frac{q^2-q}{2}$, showing that the curve $B$ in \cite{FHK} is  a very particular case of Theorem \ref{iff}.

Furthermore, we obtain in Corollary \ref{gen1} an explicit characterization of   $\delta$-optimal curves with $g=1$.

Finally, we deal with some properties of maximal curves.

\section{Notations and preliminary}

Let $X$ be a curve  defined over $\mathbb F_q$ with function field  $\mathbb F_q(X)$ and let $\tilde X$ be its normalization.

For every point $Q$ on $X$ we denote by  $\mathcal O_Q$ the local ring of $X$ at $Q$, by $\mathcal M_Q$  the maximal ideal of $\mathcal O_Q$ and by
$\deg Q =\left[\mathcal O_Q/\mathcal M_Q:\mathbb F_q\right]$ the degree of $Q$.

Let $\overline{\mathcal O_Q}$ be the integral closure of $\mathcal O_Q$ in $\mathbb F_q(X)$. The \emph{degree of singularity} of $Q$ on $X$ is defined by:
$$\delta_Q:=\dim_{\mathbb F_q}\overline{\mathcal O_Q}/\mathcal O_Q.$$
We remark that $\delta_Q=0$ if and only if $Q$ is a non-singular point. 

Now if we set:
$$\delta:=\sum_{Q\in \operatorname{Sing}X}\delta_Q,$$
with $\operatorname{Sing}X$ denoting the (finite) set of singular points on $X$, the \emph{arithmetic genus} $\pi$ of $X$ can be defined as (see Prop. 3 - Chap. IV of \cite{GACC}):
$$\pi:=g+\delta,$$
where  $g$ is the genus of $\tilde X$ (called the \emph{geometric genus} of $X$).

We have obviously $\pi\geq g$ and we have $\pi=g$ if and only if $X$ is a smooth curve.

\bigskip
In  \cite{A-P} the authors established some connections between a (singular) curve and its normalization, in terms of the number of rational points and the zeta function. Firstly they proved:
\begin{equation}\label{diff} \vert \sharp \tilde{X}({\F}_q)-\sharp{X}(\F_q)\vert \leq \pi-g.\end{equation}

Furthermore they proved that the zeta function $Z_X(T)$ of $X$  is the product of the zeta function $Z_{\tilde{X}}(T)$ of $\tilde{X}$ by a polynomial of degree $\Delta_X:=\sharp(\tilde{X}(\overline{\mathbb F}_q)\backslash X(\overline{\mathbb F}_q))\leq \pi-g$. 
More precisely, if $\nu:\tilde{X}\rightarrow X$ denotes the normalization map, they showed that:
 \begin{equation}\label{zetaf}
Z_X(T)=Z_{\tilde{X}}(T)\prod_{P\in \operatorname{Sing} X }\left(\frac{\prod_{Q\in \nu^{-1}(P)}(1-T^{\deg Q})}{1-T^{\deg P}}\right).
\end{equation}
As a consequence  they obtained that for all $n\geq 1$:
 $$\sharp X(\mathbb F_{q^n})= q^n+1-\sum_{i=1}^{2g}\omega_i^n -\sum_{j=1}^{\Delta_X}\beta_j^n,$$
 for some algebraic integers $\omega_i$ of absolute value $\sqrt{q}$ and some roots of unity $\beta_j$ in $\mathbb C$.
 
 In particular, they got the inequality:
\begin{equation}\label{YvesMarc}
\vert \sharp X({\F}_q) -(q+1)\vert \leq
gm+ \pi-g.
\end{equation}
The integer part $m=[2\sqrt{q}]$ comes from the Serre improvement of the Weil bound (see \cite{Ser}).

\bigskip

For  $g$ and $\pi$ non negative integers such that $\pi\geq g$, we define the quantity 
$N_q(g,\pi)$
as the maximum number of rational points over ${\F}_q$ on a  curve defined over ${\F}_q$ of geometric genus $g$ and arithmetic genus $\pi$.

We have clearly $N_q(g,g)=N_q(g),$
where $N_q(g)$ is the usual notation for the maximum number of rational points over ${\F}_q$ on a smooth   curve defined over ${\F}_q$ of genus $g$.
If $X$ is a  curve defined over ${\F}_q$ of geometric genus $g$ and arithmetic genus $\pi$, we obtain from (\ref{diff}): 
$\sharp{X}({\F}_q) \leq \sharp \tilde{X}({\F}_q)+\pi-g\leq N_q(g)+\pi-g.$
Therefore we have the following upper bound for $N_q(g,\pi)$:
$$N_q(g,\pi)\leq N_q(g)+\pi-g=N_q(g)+\delta \eqno(B)$$
and using the inequality (\ref{YvesMarc}), we have also:

$$N_q(g,\pi)\leq  q+1+gm+ \pi-g. \eqno(A')$$

A  question naturally arises:
\begin{center}\emph{For which $q, \, g$ and $\pi$ the bounds $(B)$ and $(A')$ are attained?  }

\end{center}

In order to answer the question,  we are going  to construct singular curves with prescribed geometric genus $g$ and  arithmetic genus $\pi$ and having ``many" rational points.


\section{Curves with prescribed singularities}

Let $X$ be a smooth curve over $\mathbb F_q$ with function field $\mathbb F_q(X)$ and let $S=\{Q_1,\ldots, Q_s\}$ be a non-empty finite set of closed points on $X$. We consider the following  subring  $\mathcal O \subset \mathbb F_q(X)$:
$$\mathcal O=\bigcap_{i=1}^s\mathcal O_{Q_i}.$$
The ring $\mathcal O$ is a finite intersection of discrete valuation rings. It is well known (see Prop. 3.2.9 in \cite{STI}) that $\mathcal O$  is a semi-local ring, its maximal ideals are precisely $\mathcal N_{Q_i} := \mathcal M_{Q_i} \cap \mathcal O$ for  $i=1,\ldots, s$ and the fields $\mathcal O_{Q_i}/\mathcal M_{Q_i}$ and $\mathcal O/\mathcal N_{Q_i}$ are isomorphic. Moreover $\mathcal O$ is a principal ideal domain and therefore a Dedekind domain (see Prop. 3.2.10 in \cite{STI}).

Now, let $n_1, \ldots , n_s$ be $s$ positive integers, set ${\mathcal N}:=\mathcal N_{Q_1}^{n_1}\cdots \mathcal N_{Q_s}^{n_s}$  and let us consider the subring $\mathcal O'\subseteq \mathcal O $ defined by:
\begin{equation}\label{notation}
\mathcal O':= \mathbb F_q + \mathcal N.
\end{equation}
Hence a function in $\mathcal O'$  takes the same value at the points $Q_1,\ldots, Q_s$.

\bigskip
We can represent the inclusions with the following diagram:
\begin{center}
\begin{tikzpicture}[description/.style={fill=white,inner sep=2pt}]
\matrix (m) [matrix of math nodes, row sep=2em,
column sep=3em, text height=1.5ex, text depth=0.25ex]
{  &\mathbb F_q(X)&\\
\mathcal O_{Q_1}& \cdots &  \mathcal O_{Q_s} \\
   &  \mathcal O&  \\
   & \mathcal O'&  \\ };
\path[-,font=\scriptsize]
(m-1-2) edge (m-2-1)
		edge	(m-2-2)
		edge	(m-2-3)
(m-2-1) edge (m-3-2)
(m-2-2) edge (m-3-2)
(m-2-3) edge (m-3-2)
(m-3-2) edge (m-4-2)
	    ;
\end{tikzpicture}

\end{center}

\begin{proposition} \label{local ring}$\mathcal O'$ verifies the following properties:
\begin{enumerate}
\item $\Frac(\mathcal O')=\mathbb F_q(X)$
and $\mathcal O$ is the integral closure of $\mathcal O'$ in $\mathbb F_q(X)$.
\item $\mathcal O'$ is a local ring with maximal ideal $\mathcal N$ and residual field  $\mathcal O'/\mathcal N\cong \mathbb F_q$. Moreover $\mathcal N$ is the conductor of $\mathcal O'$ in $\mathcal O$ and, by definition, it contains the functions of $\mathcal O'$ that vanish at the points $Q_1,\ldots, Q_s$.
\item $\mathcal O/\mathcal O'$ is an $\mathbb F_q$-vector space such that $$\dim_{\mathbb F_q} (\mathcal O/\mathcal O')=\sum_{i=1}^sn_i\deg{Q_i}-1.$$
\end{enumerate}
\end{proposition}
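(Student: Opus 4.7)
I would prove the three parts in the order (3), (1), (2), since the finiteness from (3) is convenient for (1), and the local structure proved in (2) uses facts already established.

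For part (3), the plan is to exploit the short exact sequence of $\mathbb{F}_q$-vector spaces
\begin{equation*}
0 \longrightarrow \mathcal{O}'/\mathcal{N} \longrightarrow \mathcal{O}/\mathcal{N} \longrightarrow \mathcal{O}/\mathcal{O}' \longrightarrow 0.
\end{equation*}
The left term has dimension $1$ directly from the definition $\mathcal{O}' = \mathbb{F}_q + \mathcal{N}$ (the sum being direct because $\mathcal{N}$ is a proper ideal of $\mathcal{O}$ not meeting $\mathbb{F}_q^{\times}$). Since $\mathcal{O}$ is a Dedekind (in fact semi-local principal) domain, the Chinese Remainder Theorem gives $\mathcal{O}/\mathcal{N} \cong \prod_{i=1}^{s} \mathcal{O}/\mathcal{N}_{Q_i}^{n_i} \cong \prod_{i=1}^{s}\mathcal{O}_{Q_i}/\mathcal{M}_{Q_i}^{n_i}$. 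Using the filtration $\mathcal{O}_{Q_i} \supset \mathcal{M}_{Q_i} \supset \cdots \supset \mathcal{M}_{Q_i}^{n_i}$ whose successive quotients are copies of the residue field of degree $\deg Q_i$, each factor has $\mathbb{F}_q$-dimension $n_i \deg Q_i$, and summing yields the stated formula.

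For part (1), to see that $\Frac(\mathcal{O}') = \mathbb{F}_q(X)$, I pick any nonzero $f \in \mathcal{N}$; for every $g \in \mathcal{O}$, the product $gf \in \mathcal{N} \subset \mathcal{O}'$, so $g = (gf)/f \in \Frac(\mathcal{O}')$. Thus $\mathcal{O} \subset \Frac(\mathcal{O}')$, and since $\Frac(\mathcal{O}) = \mathbb{F}_q(X)$ we are done. For the integral closure statement, part (3) shows that $\mathcal{O}$ is a finitely generated $\mathcal{O}'$-module, hence integral over $\mathcal{O}'$; on the other hand $\mathcal{O}$, being a finite intersection of DVRs of $\mathbb{F}_q(X)$, is itself integrally closed in $\mathbb{F}_q(X)$, so it is exactly the integral closure of $\mathcal{O}'$.

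For part (2), the step I expect to be the main (mild) obstacle is showing that $\mathcal{O}'$ is local, i.e., that every element outside $\mathcal{N}$ is a unit in $\mathcal{O}'$ itself, not merely in $\mathcal{O}$. First I verify that $\mathcal{N}$ is an ideal of $\mathcal{O}'$: for $a + n' \in \mathcal{O}'$ and $n \in \mathcal{N}$, $(a+n')n \in \mathcal{N}$ since $\mathcal{N}$ is already an ideal of $\mathcal{O}$. The definition immediately gives $\mathcal{O}'/\mathcal{N} \cong \mathbb{F}_q$, so $\mathcal{N}$ is maximal. Now take $f = a + n \in \mathcal{O}' \setminus \mathcal{N}$, so $a \in \mathbb{F}_q^{\times}$. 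Since $f$ does not lie in any $\mathcal{N}_{Q_i}$ (it reduces to $a \neq 0$ there), it is a unit in $\mathcal{O}$, and the identity
\begin{equation*}
f^{-1} - a^{-1} \;=\; \frac{a - f}{af} \;=\; -\frac{n\, f^{-1}}{a}
\end{equation*}
lies in $\mathcal{N}$ because $n \in \mathcal{N}$, $f^{-1} \in \mathcal{O}$, and $\mathcal{N}$ is an ideal of $\mathcal{O}$. Hence $f^{-1} \in \mathbb{F}_q + \mathcal{N} = \mathcal{O}'$. Finally, for the conductor: $\mathcal{N}$ is an ideal of $\mathcal{O}$ contained in $\mathcal{O}'$, and conversely any ideal $I$ of $\mathcal{O}$ with $I \subset \mathcal{O}'$ must be contained in $\mathcal{N}$, since any element $a + n \in I$ with $a \neq 0$ would be a unit of $\mathcal{O}$, forcing $I = \mathcal{O}$ and (in the non-degenerate case) contradicting $I \subset \mathcal{O}'$.
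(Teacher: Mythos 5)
Your proof is correct, but on two of the three points it takes a genuinely different route from the paper. For the integral closure in (1), the paper argues valuation-theoretically: it shows via the Strong Approximation Theorem that no valuation ring of $\mathbb F_q(X)$ other than $\mathcal O_{Q_1},\ldots,\mathcal O_{Q_s}$ can contain $\mathcal O'$, so that $\overline{\mathcal O'}=\bigcap_i\mathcal O_{Q_i}=\mathcal O$. You instead deduce from the finite-dimensionality in (3) that $\mathcal O$ is a finite $\mathcal O'$-module, hence integral over $\mathcal O'$, and combine this with the integral closedness of $\mathcal O$; this is more elementary and self-contained (it needs no approximation theorem), whereas the paper's argument keeps the correspondence between valuation rings and points in the foreground, which is the viewpoint exploited in the rest of the section. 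For the locality in (2), the paper shows $\mathcal N$ is maximal by a Zorn's lemma argument and then rules out other maximal ideals via the Going Up Theorem (any other maximal ideal would be the contraction of some $\mathcal N_{Q_i}$, which already contracts to $\mathcal N$); your direct verification that any $f=a+n$ with $a\in\mathbb F_q^{\times}$ has $f^{-1}=a^{-1}-a^{-1}nf^{-1}\in\mathbb F_q+\mathcal N$ is shorter and avoids both tools, at the cost of having to check explicitly that $f$ is a unit of $\mathcal O$ (which you do correctly, since $f\notin\mathcal N_{Q_i}$ for all $i$). Part (3) is essentially the paper's argument (third isomorphism theorem plus CRT), with the filtration by powers of $\mathcal M_{Q_i}$ making the dimension count slightly more explicit. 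One small point both you and the paper gloss over: in the degenerate case $s=1$, $n_1=1$, $\deg Q_1=1$ one has $\mathcal O'=\mathcal O$ and the conductor is all of $\mathcal O$, not $\mathcal N$; you at least flag this by restricting to the non-degenerate case.
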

\begin{proof}
Let us prove the first assertion. We know from Prop. 3.2.5 in \cite{STI} that $\Frac(\mathcal O)= \mathbb F_q(X)$. So it is enough to show that $\mathcal O\subseteq \Frac(\mathcal O')$.

As $\mathcal O$ is a principal ideal domain, there exists $t\in \mathcal O $ such that  $\mathcal N=t\mathcal O$. So let $x\in \mathcal O$. We have
$x=\frac{tx^2}{tx}$
so that $x\in  \Frac(\mathcal O')$.
 
  The integral closure  $\overline {\mathcal O'}$ of $\mathcal O'$ in $\mathbb F_q(X)$ is given by the intersection of all valuation rings of $\mathbb F_q(X)$ which contain $\mathcal O'$. So $\overline{\mathcal O'} \subseteq \bigcap_{i=1}^s \mathcal O_{Q_i}=\mathcal O.$ Hence it is enough to show that there are no other valuation rings that contain $\mathcal O'$.

Let $\mathcal O_P$ be a valuation ring in $\mathbb F_q(X)$ different from $\mathcal O_{Q_1},\ldots,\mathcal O_{Q_s}$  and let $v_P,v_{Q_1},\ldots, v_{Q_s}$ be the corresponding valuations. By the Strong Approximation Theorem  (see Prop. 1.6.5 in \cite{STI}) we can find an element $x \in\mathbb F_q(X)$ such that
$v_P(x)=-1$ and
$v_{Q_i}(x)= n_i$ for every  $i=1,\ldots,s.$
This implies that $x\in \mathcal N\subseteq \mathcal O'$ and $x\notin \mathcal O_P$. We conclude that $\overline {\mathcal O'}=\mathcal O$.

We prove now the second assertion. First we show that $\mathcal N$ is a maximal ideal in $\mathcal O'$.
We have that $\mathcal O'\setminus \mathcal N=\{a+n : a\in \mathbb F_q^*$ and $n\in \mathcal N \}$. If $\mathcal N$ were not a maximal ideal, by Zorn's lemma there would exist a maximal ideal $\mathcal N'$ such that $\mathcal N\subsetneq \mathcal N' \subsetneq \mathcal O'$. Let $x\neq 0$ be in $(\mathcal O'\setminus \mathcal N)\cap \mathcal N'$. So $x= a+n$, where $ a\in \mathbb F_q^*$ and $n\in \mathcal N$. Hence $a=x-n \in\mathcal N'$ and $\mathcal N'=\mathcal O'$, which is a  contradiction. 

If there existed another maximal ideal in $\mathcal O'$, by the Going Up Theorem (see Th. 5.10 in \cite{A-M}), it would be the contraction of a maximal ideal in $\mathcal O$.
But for every $i=1,\ldots,s$,  we have $\mathcal N\subseteq \mathcal N_{Q_i}\cap \mathcal O'$ and, as $\mathcal N$ is a maximal ideal in $\mathcal O'$, we have that $\mathcal N= \mathcal N_{Q_i}\cap \mathcal O'$. Hence $\mathcal N$ is the only maximal ideal in $\mathcal O'$ and thus $\mathcal O'$ is a local ring. We have obviously $\mathcal O'/\mathcal N\cong \mathbb F_q$.

Moreover, as $\mathcal N$ is both an ideal of $\mathcal O$ and the maximal ideal of $\mathcal O'$, it is the conductor of $\mathcal O'$ in $\mathcal O$.

Let us prove now the third assertion. By the third isomorphism theorem of modules we have:
$$\mathcal O/\mathcal O' \cong \frac{\mathcal O/\mathcal N}{\mathcal O'/\mathcal N}.$$
Furthermore we have an isomorphism of $\mathbb F_q$-vector spaces:
$${\mathcal O}/{\mathcal N}=\frac{\mathcal O}{\mathcal N_{Q_1}^{n_1}\cdots\mathcal N_{Q_s}^{n_s}}\cong   \prod_{i=1}^{s}\left(\frac{\mathcal O_{Q_i}}{\mathcal M_{Q_i}}\right)^{n_i}.$$
Hence $\dim_{\mathbb F_q}\mathcal O/\mathcal N= \sum_{i=1}^{s}n_i \deg Q_i$
and thus
$$\dim_{\mathbb F_q} \mathcal O/\mathcal O'=\dim_{\mathbb F_q}\mathcal O/\mathcal N -\dim_{\mathbb F_q}\mathcal O'/\mathcal N= \sum_{i=1}^{s}n_i \deg Q_i - 1.$$
\end{proof}
The special case of Proposition $\ref{local ring}$ for which $S$ is a singleton $\{Q\}$ will be pivotal in the next section:

\begin{corollary}\label{centro}
Let  $\mathcal O_Q$  be a discrete valuation ring of $\mathbb F_q(X)$ with maximal ideal $\mathcal M_Q$. Then the ring $\mathcal O'_Q:= \mathbb F_q+\mathcal M_Q$ is a local ring contained in $\mathcal O_Q$ such that $\left[\mathcal O'_Q/\mathcal M_Q:\mathbb F_q\right]=1.$ Furthermore $\mathcal O_Q$ is the integral closure of  $\mathcal O'_Q$ and $\mathcal O_Q/\mathcal O'_Q$ is an $\mathbb F_q$-vector space of dimension $\deg Q-1$.  
\end{corollary}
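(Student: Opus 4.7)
The plan is to recognize this corollary as a direct specialization of Proposition \ref{local ring} to the case where the finite set $S$ consists of a single closed point, namely $S = \{Q\}$, with the single exponent $n_1 = 1$. So the entire strategy is: unpack the definitions under this specialization and read off the three conclusions from the three parts of the proposition.

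First I would take $s = 1$ and $Q_1 = Q$, so that $\mathcal O = \mathcal O_Q$ becomes the intersection of a single discrete valuation ring with itself, i.e.\ $\mathcal O_Q$ itself. The unique maximal ideal $\mathcal N_{Q_1} = \mathcal M_Q \cap \mathcal O_Q$ then coincides with $\mathcal M_Q$. Setting $n_1 = 1$ gives $\mathcal N = \mathcal N_{Q_1}^{1} = \mathcal M_Q$, so that
\[
\mathcal O' \;=\; \mathbb F_q + \mathcal N \;=\; \mathbb F_q + \mathcal M_Q \;=\; \mathcal O'_Q,
\]
matching the ring in the statement. The containment $\mathcal O'_Q \subset \mathcal O_Q$ is then immediate from the definition.

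Next I would read off each of the three claims from Proposition \ref{local ring}. Part (2) of that proposition says that $\mathcal O'$ is local with maximal ideal $\mathcal N$ and residue field $\mathcal O'/\mathcal N \cong \mathbb F_q$; in our setting this says $\mathcal O'_Q$ is local with maximal ideal $\mathcal M_Q$ and $[\mathcal O'_Q/\mathcal M_Q : \mathbb F_q] = 1$. Part (1) gives that $\mathcal O = \mathcal O_Q$ is the integral closure of $\mathcal O' = \mathcal O'_Q$ in $\mathbb F_q(X)$. Part (3) gives
\[
\dim_{\mathbb F_q}(\mathcal O_Q/\mathcal O'_Q) \;=\; \sum_{i=1}^{s} n_i \deg Q_i - 1 \;=\; 1 \cdot \deg Q - 1 \;=\; \deg Q - 1,
\]
which is the last assertion.

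Since this is purely a specialization, there is really no obstacle — the only small point worth mentioning explicitly is that the intersection defining $\mathcal O$ degenerates to a single valuation ring when $s=1$, and that $\mathcal N = \mathcal M_Q$ after setting $n_1=1$, so that the statement matches verbatim. No auxiliary computation beyond substituting $s=1$ and $n_1=1$ is needed.
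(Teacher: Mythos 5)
Your proof is correct and matches the paper exactly: the paper presents this corollary without a separate proof, simply as the special case of Proposition \ref{local ring} with $S=\{Q\}$ (and implicitly $n_1=1$), which is precisely the specialization you carry out.
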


\begin{remark}\label{biggest}
We remark that the ring $\mathcal O'_Q$, defined as in Corollary $\ref{centro}$, is  the biggest (according to inclusion) local ring contained in $\mathcal O_Q$ such that $\left[\mathcal O'_Q/\mathcal M_Q:\mathbb F_q\right]=1.$ In fact, let $\mathcal O$ be a local ring contained in $\mathcal O_Q$, with $\mathcal M$ as maximal ideal, such that $\left[\mathcal O/\mathcal M:\mathbb F_q\right]=1$. As $\mathcal O/\mathcal M\cong\mathbb F_q$, every element $x$ in $\mathcal O$ is of the form $x=a+m$, with $a\in \mathbb F_q$ and $m\in \mathcal M$. Thus $\mathcal O=\mathbb F_q+\mathcal M$. But  $\mathcal M= \mathcal M_Q\cap \mathcal O\subseteq \mathcal M_Q$ and hence $\mathcal O\subseteq \mathcal  O'_Q$.
\end{remark}

The operations on the valuation rings contained in $\mathbb F_q(X)$ correspond to operations on the points on $X$: roughly speaking,  ``glueing'' together $\mathcal O_{Q_1},\ldots,\mathcal O_{Q_s}$ in the local ring $\mathcal O'$ corresponds to ``glueing'' together the non-singular closed points $Q_1,\ldots, Q_s$ in a singular rational one. In this way, starting from a smooth curve $X$, we define a ``glued'' curve $X'$ which is biregularly equivalent to $X$ except at the ``glued'' points, and for which the ``glued'' non-singular points are replaced by a singularity of a specific degree of singularity.

Formally:

\begin{theorem}\label{singu}
Let $X$ be a smooth  curve of genus $g$ defined over $\mathbb F_q$, let $Q_1,\ldots, Q_s$ be closed points on $X$. Let $n_1,\ldots, n_s$ be positive integers and consider the local ring $\mathcal O'$ defined as in (\ref{notation}). 

Then there exists a curve $X'$ defined over $\mathbb F_q$, having $X$ as normalization, with only one singular  point $P$ whose local ring is the  prescribed local ring $\mathcal O'$. Moreover $P$ is a  rational point on $X'$ with a  degree of singularity equal to $\sum_{i=1}^sn_i\deg{Q_i}-1$ and $X'$ has arithmetic genus  $g +\sum_{i=1}^sn_i\deg{Q_i}-1.$ \end{theorem}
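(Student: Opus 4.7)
The plan is to follow the gluing construction of Rosenlicht and Serre in Chap.~IV of \cite{GACC}. First I would define $X'$ as a ringed space: as a topological space, $X'$ is the quotient of $X$ obtained by identifying the closed points $Q_1, \ldots, Q_s$ to a single new point $P$; the structure sheaf is then prescribed by $\mathcal{O}_{X', R} = \mathcal{O}_{X, R}$ for every point $R \neq P$, and $\mathcal{O}_{X', P} = \mathcal{O}'$.

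The main technical step is to verify that this ringed space is in fact a projective algebraic curve over $\mathbb{F}_q$, which amounts to exhibiting an affine open neighborhood of $P$ in $X'$. I would choose an affine open $U \subset X$ containing $\{Q_1, \ldots, Q_s\}$, set $B = \mathcal{O}_X(U)$, and consider the ideal $\mathfrak{a} = \mathfrak{m}_{Q_1}^{n_1} \cdots \mathfrak{m}_{Q_s}^{n_s}$ of $B$, so that $\mathcal{N} = \mathfrak{a}\,\mathcal{O}$. Setting $B' := \mathbb{F}_q + \mathfrak{a}$, the quotient $B/B'$ is a finite-dimensional $\mathbb{F}_q$-vector space by the same computation as in Proposition~\ref{local ring}(3); consequently $B'$ is a finitely generated $\mathbb{F}_q$-algebra with fraction field $\mathbb{F}_q(X)$, its integral closure is $B$, and localization of $B'$ at its maximal ideal $\mathfrak{a}$ recovers $\mathcal{O}'$. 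Gluing $\Spec(B')$ with $X \setminus \{Q_1, \ldots, Q_s\}$ along the common open $U \setminus \{Q_1, \ldots, Q_s\}$ then yields a complete (hence projective) absolutely irreducible curve $X'$ over $\mathbb{F}_q$ realising the prescribed structure sheaf. This assembly is the main obstacle, but is routine given the standard dictionary between affine curves and their coordinate rings.

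Once $X'$ is built, the remaining assertions follow directly from Proposition~\ref{local ring}. The canonical morphism $\nu: X \to X'$ is finite and birational; since $X$ is smooth, hence normal, $\nu$ is the normalization of $X'$. Every point of $X'$ other than $P$ has local ring $\mathcal{O}_{X,R}$, so is non-singular; while $P$ is a singular point whose residue field is $\mathbb{F}_q$ by Proposition~\ref{local ring}(2), hence rational, with
$$\delta_P \;=\; \dim_{\mathbb{F}_q} \overline{\mathcal{O}_P}/\mathcal{O}_P \;=\; \dim_{\mathbb{F}_q} \mathcal{O}/\mathcal{O}' \;=\; \sum_{i=1}^{s} n_i \deg Q_i - 1$$
by parts (1) and (3) of Proposition~\ref{local ring}. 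As $P$ is the only singular point, $\delta = \delta_P$ and the arithmetic genus of $X'$ is $\pi = g + \delta_P = g + \sum_{i=1}^{s} n_i \deg Q_i - 1$, completing the proof.
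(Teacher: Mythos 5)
Your proposal is correct and takes essentially the same route as the paper: the paper simply cites Th.~5 of \cite{ROS} and Prop.~2, Chap.~IV of \cite{GACC} for the existence of $X'$ and then invokes Proposition~\ref{local ring} for the rationality of $P$, its degree of singularity, and the arithmetic genus, exactly as you do. The only difference is that you unwind the Rosenlicht--Serre gluing construction explicitly (quotient ringed space, affine model $B'=\mathbb F_q+\mathfrak a$, regluing), which the paper leaves to the references.
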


\begin{proof}
The existence of the curve $X'$ is proved in Th. 5 in \cite{ROS} and  Prop. 2 - Chap. IV in \cite{GACC}.
The degree of the singular point, its degree of singularity and the arithmetic genus of $X'$ come from Proposition \ref{local ring} and the definition of the arithmetic genus.
\end{proof}
We can remark that, by construction, the curve $X'$ has a number of rational points equal to $$\sharp X'(\mathbb F_q) =\sharp X(\mathbb F_q) - \sharp\{ Q_i, i=1,\ldots, s\textrm{ such that } Q_i \textrm{ is rational  over } \mathbb F_q \}+1.$$

\begin{remark}\label{plusdepoints}
In the previous theorem we limit ourselves to the construction of a curve with only one singularity, but there is nothing preventing us from constructing more singularities at the same time, by giving a finite number of prescribed local rings, no two of which have a place in common.
\end{remark}


\section{Singular curves with many rational points and small arithmetic genus}

The construction given in the previous section can be used to produce singular curves with prescribed geometric genus and arithmetic genus and with many rationals points.

Firstly, we have the following lower bound for the quantity $N_q(g,\pi)$.

\begin{proposition}\label{lower} For $q$ a power of a prime, $g$ and $\pi$ non negative integers such that $\pi\geq g$  we have:
$$N_q(g,\pi)\geq N_q(g).$$
\end{proposition}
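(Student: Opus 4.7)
The plan is to exhibit, for each pair $(g,\pi)$ with $\pi\geq g$, a curve of geometric genus $g$ and arithmetic genus $\pi$ with at least $N_q(g)$ rational points by thickening a single rational point of an optimal smooth curve via Theorem~\ref{singu}. The case $\pi=g$ is immediate from the definition $N_q(g,g)=N_q(g)$, so I may suppose $\pi>g$ and, moreover, $N_q(g)\geq 1$ (the case $N_q(g)=0$ is discussed at the end).

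Pick a smooth curve $X/\F_q$ of genus $g$ with $\sharp X(\F_q)=N_q(g)$ and a rational point $Q\in X(\F_q)$. Apply Theorem~\ref{singu} to $X$ with the single closed point $Q_1=Q$ and exponent $n_1=\pi-g+1$, which is a positive integer. This produces a curve $X'/\F_q$ having $X$ as normalisation (hence geometric genus $g$) together with a single rational singular point $P$ of degree of singularity
\[
n_1\deg Q - 1 = (\pi-g+1)\cdot 1 - 1 = \pi-g,
\]
so of arithmetic genus $g+(\pi-g)=\pi$. The rational-point count recorded right after Theorem~\ref{singu} gives
\[
\sharp X'(\F_q)=\sharp X(\F_q)-1+1=N_q(g),
\]
since the unique rational point that is glued ($Q$) is replaced by the new rational singularity $P$. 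This immediately yields $N_q(g,\pi)\geq\sharp X'(\F_q)=N_q(g)$.

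There is no substantive obstacle in the argument: the entire construction is carried out by Theorem~\ref{singu}, and the exponent $n_1=\pi-g+1$ is dictated by the formula for the degree of singularity. The only edge case deserving attention is $N_q(g)=0$, in which no rational $Q$ is available on any smooth genus-$g$ curve; in that case the inequality $N_q(g,\pi)\geq 0$ is vacuous as soon as a curve of the prescribed genera exists, and such a curve can still be manufactured from Theorem~\ref{singu} by gluing at a closed point of a suitable higher degree $d$ with an exponent $n_1$ satisfying $n_1 d - 1=\pi-g$.
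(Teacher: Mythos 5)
Your proof is correct and follows essentially the same route as the paper: take an optimal smooth curve of genus $g$, pick a rational point $Q$, and apply Theorem~\ref{singu} with the local ring $\mathbb F_q+(\mathcal M_Q)^{\pi-g+1}$ to get a curve of arithmetic genus $\pi$ with the same number of rational points. The extra remarks on the cases $\pi=g$ and $N_q(g)=0$ are harmless refinements not present in the paper's argument.
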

\begin{proof}
Let $X$ be a smooth curve of genus $g$ defined over $\mathbb F_q$  with $N_q(g)$ rational points and let $Q$ be a rational point on $X$. Let us consider the local ring
$$\mathcal O':= \mathbb F_q+ (\mathcal M_Q)^{\pi-g+1}.$$
By Theorem $\ref{singu}$, there exists  a curve $X'$ defined over $\mathbb F_q$, having $X$ as normalization and  biregularly equivalent to $X$ except at the point $Q$, such that $X'$ contains a singular point $P$ corresponding to the prescribed local ring $\mathcal O'$.  Hence $\sharp X(\mathbb F_q)=\sharp X'(\mathbb F_q)=N_q(g)$ and $X'$ has geometric genus $g$ and arithmetic genus equal to $g + \pi-g+1-1=\pi$. It follows that $ N_q(g,\pi)\geq N_q(g)$.
\end{proof}

Now, let us 
state the following useful result. 
\begin{theorem}\label{principal}
Let $X$ be a smooth curve of genus $g$ defined over $\mathbb F_q$. Let $\pi$ be an integer of the form
$$\pi=g+a_2+2a_3+3a_4+\cdots+(n-1)a_n$$
with $0\leq a_i\leq B_i(X)$, where $B_i(X)$ is the number of closed points of degree $i$ on the curve $X$. Then there exists a curve $X'$ defined over $\mathbb F_q$ of arithmetic genus $\pi$ such that $X$ is the normalization of $X'$  and
$$\sharp{X'}(\F_q)=\sharp{X}(\F_q)+a_2+a_3+a_4+\cdots+a_n.$$

\end{theorem}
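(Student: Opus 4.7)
The plan is to apply Theorem \ref{singu} at several carefully chosen closed points of $X$ independently, taking each singularity to be built from a single closed point with exponent $n_i = 1$. The arithmetic-genus gain from such a construction at a closed point of degree $i$ is exactly $i - 1$, and the rational-point gain is exactly $1$ (a non-rational closed point of $X$ is replaced by a single rational singular point of $X'$). Matching these local gains with the data $a_2, \ldots, a_n$ will immediately produce the required totals.

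Concretely, for each $i \in \{2, \ldots, n\}$ the hypothesis $a_i \leq B_i(X)$ lets me select $a_i$ pairwise distinct closed points of degree $i$ on $X$; let $\mathcal{S}$ denote the union of these choices, a finite set of closed points no two of which correspond to the same place. For each $Q \in \mathcal{S}$ of degree $i$, I prescribe the local ring $\mathcal{O}'_Q := \mathbb{F}_q + \mathcal{M}_Q$ of Corollary \ref{centro}, which is a local subring of $\mathcal{O}_Q$ with residue field $\mathbb{F}_q$ and with $\dim_{\mathbb{F}_q}(\mathcal{O}_Q/\mathcal{O}'_Q) = i - 1$. Invoking Theorem \ref{singu} together with Remark \ref{plusdepoints}, which licenses the simultaneous prescription of local rings at pairwise disjoint places, I obtain a curve $X'$ over $\mathbb{F}_q$ with normalization $X$, biregularly equivalent to $X$ outside $\mathcal{S}$, and whose singular points are exactly the rational points $P_Q$ for $Q \in \mathcal{S}$, each with local ring $\mathcal{O}'_Q$ and degree of singularity $\deg Q - 1$.

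The two numerical claims then follow by direct summation. For the arithmetic genus,
$$\pi(X') = g + \sum_{Q \in \mathcal{S}}(\deg Q - 1) = g + \sum_{i=2}^{n}(i-1)\,a_i = \pi.$$
For the rational-point count, $\mathcal{S}$ contains no degree-$1$ place, so the normalization map restricts to a bijection between $X(\mathbb{F}_q)$ and the smooth $\mathbb{F}_q$-points of $X'$; the new rational points of $X'$ are precisely the $|\mathcal{S}| = a_2 + a_3 + \cdots + a_n$ singular points $P_Q$, yielding the stated formula for $\#X'(\mathbb{F}_q)$. The only delicate step is the simultaneous construction of several independent singularities from the single-point construction of Theorem \ref{singu}, but this is exactly the content of Remark \ref{plusdepoints}, so there is no substantial obstacle once that theorem is in hand.
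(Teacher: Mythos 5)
Your proposal is correct and follows exactly the paper's own argument: choose $a_i$ closed points of degree $i$ for each $i$, prescribe at each the local ring $\mathcal O'_Q=\mathbb F_q+\mathcal M_Q$ of Corollary \ref{centro}, and apply Theorem \ref{singu} together with Remark \ref{plusdepoints} to glue all of them simultaneously, then sum the degrees of singularity. There is nothing to add; the bookkeeping for both the arithmetic genus and the rational-point count matches the paper's computation.
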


\begin{proof}
Let us take, for every $i \in\{2,\ldots, n\}$, $a_i$ closed points $Q_1^{(i)},Q_2^{(i)},\ldots,Q_{a_i}^{(i)}$
of degree $i$ on  $X$. 
We obtain a subset $S=\{Q_j^{(i)}, 2\leq i\leq n, \ 1\leq j\leq a_i\}$
of cardinality $|S|=a_2+a_3+\cdots+a_n$.

For every $Q \in S$ we set
 $\mathcal O'_Q:= \mathbb F_q+ \mathcal M_Q$ and  we have, by Corollary \ref{centro},   that
$\mathcal O'_Q$ is a  local ring with maximal ideal $\mathcal M_Q$ and 
$\left[{\mathcal O'_Q}/{\mathcal M_Q}:\mathbb F_q\right]=1.$
Moreover, 
 $\mathcal O_Q$ is the integral closure of  $\mathcal O'_Q$
 and $\mathcal O_Q/\mathcal O'_Q$ is an $\mathbb F_q$-vector space of dimension $\deg Q-1$.

By Theorem $\ref{singu}$ and Remark \ref{plusdepoints}, we get a curve $X'$ defined over ${\mathbb F}_q$ having $X$ as its normalization and such that:
$$\sharp{X'}(\F_q)=\sharp{X}(\F_q)+a_2+a_3+a_4+\cdots+a_n.$$

Furthermore:
$$\pi=g+\sum_{2\leq i\leq n} \sum_{1\leq j\leq a_i}(\deg Q_j^{(i)}-1)=g+\sum_{2\leq i\leq n} \sum_{1\leq j\leq a_i}(i-1)$$
and thus:
$$\pi=g+\sum_{2\leq i\leq n}(i-1)a_i.$$
\end{proof}

\begin{remark}\label{remark1}
 Roughly speaking,  Theorem $\ref{principal}$ shows that we can ``transform'' a point of degree $d$ on a smooth curve in a singular rational one, provided that we increase the value of the arithmetic genus by $d-1$. \end{remark}

\begin{remark}\label{best}
The construction described in the proof of Theorem $\ref{principal}$ is ``optimal'', meaning that it allows us to provide a curve with a large number of rational points compared with its arithmetic genus. More precisely,
if $X$ is a curve defined over $\mathbb F_q$ of geometric genus $g$, arithmetic genus $\pi$ and with $N$ rational points and if $\tilde{X}$ is its normalization, then there exists a curve $X'$  constructed as in the proof of Theorem \ref{principal} of arithmetic genus $\pi'\leq \pi$, with $N'\geq N$ rational points and whose normalization is $\tilde{X}$. Indeed from Remark $\ref{biggest}$, since $\mathcal O'_Q$ is the biggest local ring contained in $\mathcal O_Q$ such that $\left[\mathcal O'_Q/\mathcal M_Q:\mathbb F_q\right]=1$, we obtain that every point in $S$ is ``replaced'' by a rational singular one  with the smallest  possible degree of singularity.
\end{remark}


\section{On $\delta$-optimal curves}

We introduce the following terminology, as mentioned in the introduction:
\begin{definition} Let $X$ be a curve over $\mathbb F_q$ of geometric genus $g$ and arithmetic genus $\pi$. The curve $X$ is said to be:
\begin{itemize} 
\item[(i)]  an \emph{optimal curve} if
$$\sharp{X}({\F}_q)= N_q(g,\pi);$$
\item[(ii)] a \emph{$\delta$-optimal curve} if 
$$\sharp{X}({\F}_q)= N_q(g)+ \pi-g=N_q(g)+\delta;$$
\item[(iii)] a \emph{maximal curve} if it attains the bound $(A)$, that is if
$$\sharp{X}({\F}_q)= q+1+gm+ \pi-g.$$
\end{itemize}
\end{definition}
Obviously, we have that maximal curves are $\delta$-optimal curves, and $\delta$-optimal curves are optimal curves. Moreover, we find again the classical definitions of optimal curve and maximal curve when $X$ is smooth.

Firstly we give some properties of $\delta$-optimal curves.
\begin{proposition}\label{opt}
Let $X$ be a  curve of geometric genus $g$ and arithmetic genus $\pi$. If $X$ is $\delta$-optimal then 
its normalization $\tilde{X}$ is an optimal curve and
 all the singular points are rational with degree of singularity equal to $1$. Moreover,
 if $\nu$ denotes the normalization map and $P$ is a singular point on $X$, then $\nu^{-1}(P)=\{Q\}$, with $Q$  a point of degree $2$ on $\tilde{X}$. Furthermore, we have
 $\pi-g\leq B_2(\tilde{X})$
and $Z_X(T)=Z_{\tilde{X}}(T)(1+T)^{\pi-g}$.

\end{proposition}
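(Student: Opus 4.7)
The plan is to exploit the tightness of the chain
\[
\sharp X(\F_q) \;\leq\; \sharp \tilde X(\F_q) + (\pi - g) \;\leq\; N_q(g) + (\pi - g),
\]
coming from (\ref{diff}) and (B). Since $X$ is $\delta$-optimal, equality holds throughout, which at once gives $\sharp \tilde X(\F_q) = N_q(g)$, so $\tilde X$ is optimal, and pins down the difference $\sharp X(\F_q) - \sharp \tilde X(\F_q) = \pi - g$.

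Next I would express that difference combinatorially via $\nu$. Non-singular points of $X$ are in bijection with their preimages of the same degree, so only singular points contribute:
\[
\sharp X(\F_q) - \sharp \tilde X(\F_q) \;=\; \sum_{P \in \Sing X}\bigl(\varepsilon_P - r_P\bigr),
\]
with $\varepsilon_P = 1$ if $P$ is $\F_q$-rational and $0$ otherwise, and $r_P := \sharp\{Q \in \nu^{-1}(P) : Q \in \tilde X(\F_q)\}$. The left side equals $\pi - g = \sum_P \delta_P$; since each summand is $\leq 1$ while each $\delta_P \geq 1$, matching the totals forces $\varepsilon_P - r_P = 1$ and $\delta_P = 1$ for every singular $P$. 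Thus every singular point is $\F_q$-rational, has $\delta_P = 1$, and has no rational preimage.

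Fix such a $P$ and write $\nu^{-1}(P) = \{Q_1, \dots, Q_r\}$ with $d_i = \deg Q_i$. From $\mathcal O_P / \mathcal M_P \cong \F_q$ we get $\mathcal O_P = \F_q + \mathcal M_P$. By the Going-Up Theorem applied to the integral extension $\mathcal O_P \subseteq \overline{\mathcal O_P}$, each maximal ideal $\mathcal N_{Q_i}$ of $\overline{\mathcal O_P}$ contracts to the unique maximal ideal $\mathcal M_P$; hence $\mathcal M_P \subseteq \bigcap_i \mathcal N_{Q_i} = \prod_i \mathcal N_{Q_i} =: \mathcal N$ (the last equality by comaximality in the Dedekind domain $\overline{\mathcal O_P}$). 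This is the semi-local analogue of Remark \ref{biggest} and gives $\mathcal O_P \subseteq \F_q + \mathcal N$, so by Proposition \ref{local ring}(3)
\[
\delta_P \;=\; \dim_{\F_q}(\overline{\mathcal O_P}/\mathcal O_P) \;\geq\; \dim_{\F_q}\bigl(\overline{\mathcal O_P}/(\F_q + \mathcal N)\bigr) \;=\; \sum_{i=1}^{r} d_i - 1.
\]
With $\delta_P = 1$ we obtain $\sum d_i \leq 2$; the constraint $r_P = 0$ rules out both $(r, d_1) = (1, 1)$ (the preimage would be rational) and $(r, d_1, d_2) = (2, 1, 1)$ (both preimages would be rational), leaving only $r = 1$ with $\deg Q_1 = 2$.

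Finally, since $\pi - g = \sum_P \delta_P$ equals the number of singular points and each singular point produces a distinct closed point of degree $2$ on $\tilde X$, we get $\pi - g \leq B_2(\tilde X)$. Substituting $\deg P = 1$ and $\deg Q = 2$ into (\ref{zetaf}) yields
\[
Z_X(T) = Z_{\tilde X}(T) \prod_{P \in \Sing X} \frac{1 - T^2}{1 - T} = Z_{\tilde X}(T)(1 + T)^{\pi - g}.
\]
I expect the main technical obstacle to be the Going-Up step that forces $\mathcal M_P$ inside every $\mathcal N_{Q_i}$, i.e.\ the semi-local promotion of Remark \ref{biggest}; the remainder is a bookkeeping combination of (\ref{diff}), (B), and Proposition \ref{local ring}.
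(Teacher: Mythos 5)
Your proof is correct, and the overall skeleton (forcing equality in the chain $\sharp X(\F_q)\leq \sharp\tilde X(\F_q)+\pi-g\leq N_q(g)+\pi-g$, then matching the per-point contributions against $\sum_P\delta_P$) matches the paper's. Where you genuinely diverge is in pinning down the fibre $\nu^{-1}(P)$. The paper reads this off from the zeta function: it invokes the factorization (\ref{zetaf}) together with the fact from \cite{A-P} that the extra polynomial factor has degree $\Delta_X\leq\pi-g$ (equivalently, the local inequality $\sum_{Q\in\nu^{-1}(P)}\deg Q-\deg P\leq\delta_P$), concludes each local factor has degree exactly $1$, and then excludes the node with two rational branches "because of the number of rational points". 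You instead re-derive that local inequality from scratch in the case $\deg P=1$: Going-Up forces $\mathcal M_P\subseteq\mathcal N_{Q_i}$ for every $i$, hence $\mathcal O_P\subseteq\F_q+\mathcal N$, and Proposition \ref{local ring}(3) gives $\delta_P\geq\sum_i\deg Q_i-1$; your explicit bookkeeping $\varepsilon_P-r_P=1$ then cleanly rules out both the split node ($d_1=d_2=1$) and the unibranch rational case ($r=1$, $d_1=1$, e.g.\ a cusp), which the paper dispatches only parenthetically. What your route buys is self-containedness — you do not need to import the degree bound on $\Delta_X$ from \cite{A-P}, only the factorization (\ref{zetaf}) at the very end to write down $Z_X(T)$ — at the cost of a slightly longer commutative-algebra detour; the paper's route is shorter but leans more heavily on the cited results. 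Both arguments are sound and reach the same conclusions.
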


\begin{proof}
If $\tilde{X}$ was not an optimal curve, that is $\sharp\tilde{X}(\mathbb F_q)<N_q(g)$, we would have $\sharp X(\mathbb F_q)-\sharp\tilde{X}(\mathbb F_q)>\pi-g,$ which is  absurd by $(\ref{diff})$. 

 Hence we get:
$$\sharp X(\mathbb F_q)-\sharp\tilde{X}(\mathbb F_q)=\pi-g.$$
On the other hand we know from \cite{A-P} that:
$$\sharp X(\mathbb F_q)-\sharp\tilde{X}(\mathbb F_q)\leq \sharp \operatorname{Sing} X(\mathbb F_q)\leq \sharp \operatorname{Sing} X(\overline{\mathbb F}_q)\leq \pi-g$$
so that 
$$\sharp\operatorname{Sing} X(\mathbb F_q)=\sharp\operatorname{Sing} X(\overline{\mathbb F}_q)=\pi-g.$$
In particular this means that all singular points on $X$ are rational.

Moreover as $$\pi-g=\sum_{P\in \operatorname{Sing} X} \delta_P,$$ we find that $\delta_P=1$ for all $P\in \operatorname{Sing} X$.

Now,  let us look at the zeta function of $X$, which by $(\ref{zetaf})$ is given by:
$$Z_X(T)=Z_{\tilde{X}}(T)\prod_{P\in \operatorname{Sing} X}\left(\frac{\prod_{Q\in \nu^{-1}(P)}(1-T^{\deg Q})}{1-T}\right).$$
Since in our case 
$$\deg \left(\prod_{P\in \operatorname{Sing} X }\left(\frac{\prod_{Q\in \nu^{-1}(P)}(1-T^{\deg Q})}{1-T}\right)\right)=\pi-g,$$
then for every $P\in  \operatorname{Sing} X$, we have $\nu^{-1}(P)=\{Q\}$ with $\deg Q=2$ (because of the number of rational points on $X$, we can exclude the possibility for which $\nu^{-1}(P)=\{Q_1,Q_2\}$ with  $\deg Q_1=\deg Q_2=1$). 

In particular, we obtain, firstly,  that  $\sharp \operatorname{Sing}X\leq B_2(\tilde{X})$ which implies  $\pi-g\leq B_2(\tilde{X})$, and secondly
the zeta function of $X$.
\end{proof}

We would like, now, to determine for which values of $q$, $g$ and $\pi$ there exist $\delta$-optimal curves.

Remarking that the last quantity in  Theorem $\ref{principal}$ can be written in the form 
$$\sharp{X}(\F_q)+\pi - g -(a_3+2a_4+\cdots+(n-2)a_n),$$
and considering  Remark $\ref{remark1}$ and Proposition \ref{opt}, we find that the only  possibility to obtain a $\delta$-optimal curve is to start from an optimal smooth curve of genus $g$ and work only with its points of degree $2$: in fact for every point of degree $2$ that we ``transform'' in a rational one, the arithmetic genus increases  only by $1$. In this way we are able to construct a curve for which the difference between the number of rational points on it and on its normalization is exactly equal to the difference between the arithmetic genus and the geometric one. Obviously we are limited by the fact that the number of closed points of degree $2$ is finite, ``quite little'' for an optimal smooth curve and even sometimes equal to zero.

For these reasons we  introduce the following notation. Let us denote by ${\mathcal X}_q(g)$ the set of optimal smooth curves $X$ defined over $\F_q$ of genus $g$. Let $B_2({\mathcal X}_q(g))$ be the maximum number of points of degree 2 on a curve of ${\mathcal X}_q(g)$.

\begin{theorem}\label{iff} We have: 
$$N_q(g,\pi)= N_q(g)+\pi-g
\ \ \Longleftrightarrow\ \  g\leq \pi \leq g+B_2({\mathcal X}_q(g)).$$
\end{theorem}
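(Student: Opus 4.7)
The plan is to prove the two implications separately, with each direction using one of the two main tools already built in the paper: Theorem \ref{principal} for sufficiency and Proposition \ref{opt} for necessity.

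For the implication ($\Leftarrow$), suppose $g \leq \pi \leq g + B_2(\mathcal{X}_q(g))$. The case $\pi = g$ is trivial since $N_q(g,g) = N_q(g)$ by definition. If $\pi > g$, then by the definition of $B_2(\mathcal{X}_q(g))$ there exists an optimal smooth curve $X \in \mathcal{X}_q(g)$ with $\sharp X(\F_q) = N_q(g)$ and at least $\pi - g$ closed points of degree $2$. I would apply Theorem \ref{principal} to $X$ with the choice $a_2 = \pi - g$ and $a_i = 0$ for $i \geq 3$; this is allowed because $a_2 \leq B_2(X) = B_2(\mathcal{X}_q(g))$. The resulting curve $X'$ has geometric genus $g$, arithmetic genus $g + (2-1)a_2 = \pi$, and $\sharp X'(\F_q) = N_q(g) + (\pi - g)$. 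Combined with the upper bound $(B)$, this yields $N_q(g,\pi) = N_q(g) + \pi - g$.

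For the implication ($\Rightarrow$), assume $N_q(g,\pi) = N_q(g) + \pi - g$, and let $X$ be a $\delta$-optimal curve of geometric genus $g$ and arithmetic genus $\pi$, with normalization $\tilde{X}$. By Proposition \ref{opt}, $\tilde{X}$ is an optimal smooth curve of genus $g$, so $\tilde{X} \in \mathcal{X}_q(g)$, and moreover $\pi - g \leq B_2(\tilde{X})$. Therefore
\[
\pi - g \leq B_2(\tilde{X}) \leq B_2(\mathcal{X}_q(g)),
\]
giving the desired inequality.

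Neither direction presents a real obstacle, since both of the hard technical inputs (the construction of singular curves with controlled singularities and the structural description of $\delta$-optimal curves) are already available; the argument essentially just assembles them. The only point to be slightly careful about is treating the degenerate case $\pi = g$ (where no singular construction is needed), and observing that in the sufficiency direction one uses the \emph{existence} of at least one optimal smooth curve realizing the maximum $B_2(\mathcal{X}_q(g))$, not an average or generic bound.
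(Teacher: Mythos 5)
Your proposal is correct and follows essentially the same route as the paper: the sufficiency direction applies Theorem \ref{principal} with $a_2=\pi-g$ to an optimal smooth curve realizing $B_2({\mathcal X}_q(g))$, and the necessity direction invokes Proposition \ref{opt} to bound $\pi-g$ by $B_2(\tilde{X})\leq B_2({\mathcal X}_q(g))$. Your version merely makes explicit two points the paper leaves implicit (the trivial case $\pi=g$ and the choice of a curve attaining the maximum $B_2$), which is a welcome clarification rather than a different argument.
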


\begin{proof}
Let $X$ be a curve in ${\mathcal X}_q(g)$. Let $\pi$ be an integer of the form
$\pi=g+a_2$
with $0\leq a_2\leq B_2(X)$, where $B_2(X)$ is the number of closed points of degree $2$ on the curve $X$. Then, by Theorem \ref{principal}, there exists a curve $X'$ over $\mathbb F_q$ of arithmetic genus $\pi$ such that $X$ is the normalization of $X'$ and
$$\sharp{X'}(\F_q)=\sharp{X}(\F_q)+a_2= N_q(g)+a_2.$$
Thus, for every $g\leq \pi \leq g+B_2({\mathcal X}_q(g))$, we have
 $N_q(g,\pi)=N_q(g)+\pi-g.$
 
The converse follows by Proposition \ref{opt}. 
\end{proof}

In the case where $g=0$, i.e. of rational curves,  Theorem $\ref{iff}$ turns in the following corollary:

\begin{corollary}\label{cor1} We have
$$N_q(0,\pi)=q+1+\pi
$$
 if and only if $\pi \leq \frac{q^2-q}{2}.$
\end{corollary}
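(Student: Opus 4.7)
The plan is to apply Theorem \ref{iff} with $g=0$ and then compute the quantity $B_2(\mathcal{X}_q(0))$ explicitly. First I would note that a smooth curve of geometric genus $0$ defined over $\mathbb{F}_q$ with $N_q(0) = q+1$ rational points must be isomorphic over $\mathbb{F}_q$ to $\mathbb{P}^1$: the only smooth projective genus-0 curves over $\mathbb{F}_q$ are conics, and a conic with a rational point is $\mathbb{P}^1$. Hence the set $\mathcal{X}_q(0)$ contains (up to isomorphism) only $\mathbb{P}^1$, and $B_2(\mathcal{X}_q(0)) = B_2(\mathbb{P}^1)$.

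Next I would count the closed points of degree $2$ on $\mathbb{P}^1$ over $\mathbb{F}_q$. Each such closed point corresponds to a Galois orbit of size $2$ in $\mathbb{P}^1(\mathbb{F}_{q^2})$, i.e. a pair of conjugate points in $\mathbb{P}^1(\mathbb{F}_{q^2}) \setminus \mathbb{P}^1(\mathbb{F}_q)$. Since $\sharp \mathbb{P}^1(\mathbb{F}_{q^2}) = q^2+1$ and $\sharp \mathbb{P}^1(\mathbb{F}_q) = q+1$, the number of such orbits is
$$B_2(\mathbb{P}^1) = \frac{(q^2+1)-(q+1)}{2} = \frac{q^2-q}{2}.$$

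Finally, Theorem \ref{iff} specialised to $g=0$ says that $N_q(0,\pi) = N_q(0) + \pi = q+1+\pi$ if and only if $0 \leq \pi \leq B_2(\mathcal{X}_q(0)) = \frac{q^2-q}{2}$, which is exactly the statement of the corollary. There is no real obstacle here — the only tiny subtlety is justifying that $\mathcal{X}_q(0)$ consists only of $\mathbb{P}^1$, so that $B_2(\mathcal{X}_q(0))$ is indeed given by the straightforward Galois-orbit count above.
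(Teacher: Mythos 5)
Your proposal is correct and follows exactly the paper's route: specialise Theorem \ref{iff} to $g=0$, identify the optimal smooth genus-$0$ curve as $\mathbb{P}^1$, and compute $B_2(\mathbb{P}^1)=\frac{q^2-q}{2}$ by counting Galois orbits. The paper's own proof is a one-line version of the same computation, so your write-up is just a more detailed account of the identical argument.
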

\begin{proof}
The number of closed points of degree 2 of $\mathbb P^1$ is  given by 
$B_2(\mathbb P^1)=\frac{q^2-q}{2}.$
\end{proof}

In \cite{FHK},  Fukasawa, Homma and Kim study the rational plane  curve $B$ of degree $q+1$ defined as the image of 
$${\mathbb P}^1 \ni (s,t) \longmapsto (s^{q+1},s^qt+st^q,t^{q+1})\in {\mathbb P}^2.$$
They proved that the number of rational points on $B$ over ${\mathbb F}_q$ is $q+1+\frac{q^2-q}{2}$. 
Since its arithmetic genus is $\frac{q^2-q}{2}$, it follows that
$B$ is a maximal curve.
It is an explicit example of a rational singular curve  attaining $N_q(0,\frac{q^2-q}{2})$. 
The previous Corollary shows that $\delta$-optimal rational curves exist in fact for any $\pi \leq \frac{q^2-q}{2}.$

\bigskip

For the case of  curves of geometric genus $1$, it is well-known (see for example \cite{WAT}) that 
the number $N_q(1)$ is either $q+1+m$ or $ q+m$. It is $q+1+m$ if and only if at least one of the following occurs: $p$ does not divide $m$, or $q$ is a square, or $q=p$.

Then Theorem \ref{iff} implies:

 \begin{corollary} \label{gen1}\begin{enumerate}
\item If $p$ does not divide $m$, or $q$ is a square, or $q=p$ we have:
$$N_q(1,\pi)= q+1+m+\pi-1$$
if and only if $1 \leq \pi \leq 1+\frac{q^2+q-m(m+1)}{2}$.

\item In the other cases we have
$$N_q(1,\pi)= q+m+\pi-1$$
if and only if $1 \leq \pi \leq 1+\frac{q^2+q+m(1-m)}{2}$.
\end{enumerate}

\end{corollary}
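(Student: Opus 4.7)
The plan is to invoke Theorem \ref{iff} with $g = 1$, which reduces the corollary to computing $B_2(\mathcal{X}_q(1))$ in each of the two cases. The key observation is that for an elliptic curve $X$ over $\mathbb{F}_q$, the Weil formula $\#X(\mathbb{F}_{q^n}) = q^n + 1 - \alpha^n - \bar{\alpha}^n$ (with $\alpha + \bar{\alpha} = q + 1 - N_1$ and $\alpha\bar{\alpha} = q$, where $N_1 := \#X(\mathbb{F}_q)$) shows that the full zeta function, and therefore the number of closed points of every degree, is determined by $N_1$ alone. Hence every curve in $\mathcal{X}_q(1)$ has the same value of $B_2$, and it suffices to compute $B_2(X)$ for any one optimal elliptic curve $X$.

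Next I would compute $B_2(X)$ by Galois orbit counting: each rational point contributes $1$ to $N_2 := \#X(\mathbb{F}_{q^2})$ while each closed point of degree $2$ corresponds to a Frobenius orbit of size $2$ in $X(\mathbb{F}_{q^2})$, giving $N_2 = N_1 + 2B_2(X)$, so $B_2(X) = (N_2 - N_1)/2$. Using
\[
N_2 = q^2 + 1 - (\alpha^2 + \bar{\alpha}^2) = q^2 + 1 - \bigl((q+1-N_1)^2 - 2q\bigr) = (q+1)^2 - (q+1-N_1)^2,
\]
I substitute $N_1 = q+1+m$ in case (1), which gives $(q+1-N_1)^2 = m^2$ and, after factoring,
\[
B_2(X) = \frac{(q+1+m)(q-m)}{2} = \frac{q^2+q-m(m+1)}{2}.
\]
In case (2) I substitute $N_1 = q+m$, obtaining $(q+1-N_1)^2 = (m-1)^2$ and similarly
\[
B_2(X) = \frac{q^2+q+m(1-m)}{2}.
\]
Plugging these values into the inequality $1 \leq \pi \leq 1 + B_2(\mathcal{X}_q(1))$ supplied by Theorem \ref{iff} yields the two ranges stated in the corollary.

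The main obstacle, which turns out to be only a minor verification, is ensuring $\mathcal{X}_q(1)$ is non-empty in each case so that $B_2(\mathcal{X}_q(1))$ is well-defined; this is exactly the content of the cited theorem of Waterhouse \cite{WAT}, which guarantees that the relevant value of $N_q(1)$ is actually attained by an elliptic curve over $\mathbb{F}_q$ under the stated hypotheses on $p$, $q$ and $m$. With this in hand, both directions of the equivalence follow immediately from Theorem \ref{iff}.
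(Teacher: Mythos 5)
Your proposal is correct and follows essentially the same route as the paper: both reduce the corollary to Theorem \ref{iff} with $g=1$ and compute $B_2$ of an optimal genus-$1$ curve as $\bigl(\sharp X(\mathbb F_{q^2})-\sharp X(\mathbb F_q)\bigr)/2$ using the Weil formulas, with the two cases distinguished by the value of $N_q(1)$ from Waterhouse. Your explicit remark that the zeta function, hence $B_2$, is determined by $N_1$ alone (so that $B_2(\mathcal X_q(1))$ is the same for every optimal curve) is a point the paper leaves implicit, but the argument is otherwise identical.
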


\begin{proof}
 In the first case, we have that $N_q(1)=q+1+m$. So let $X$ be a smooth curve with  $q+1+m$ rational points. If we denote by $\omega$ and $\bar\omega$ the inverse roots of the numerator of the zeta function of $X$, then we have:
\begin{equation}\label{zeta1}\sharp X(\mathbb F_{q})=q+1+m= q+1-(\omega+\bar{\omega}),\end{equation}
and 
$$\sharp X(\mathbb F_{q^2})=q^2+1-(\omega^2+\bar{\omega}^2).$$
From $(\ref{zeta1})$ we obtain that $\omega+\bar{\omega}= -m$. Since $\omega$ has absolute value $\sqrt{q}$, we have:
$\omega^2+\bar{\omega}^2=(\omega+\bar{\omega})^2-2\omega\bar{\omega}=(\omega+\bar{\omega})^2-2|\omega|^2=m^2-2q,$
so that the number of rational points on $X$ over $\mathbb F_{q^2}$ is
$\sharp X(\mathbb F_{q^2})=q^2+1-(m^2-2q).$
In this way we obtain that for every maximal smooth curve of genus $1$ the number of points of degree 2 is equal to:
$$
B_2(X)=\frac{\sharp X(\mathbb F_{q^2} )- \sharp X(\mathbb F_{q})}{2}=\frac{q^2+q-m(m+1)}{2}$$
The conclusion follows from Theorem \ref{iff}.

For the second case, the proof is totally analogous to the previous one.  
\end{proof}
\begin{remark}
We remark that  for $q=2$, $q=3$ or $q=4$, as the  quantity $\frac{q^2+q-m(m+1)}{2}$ is equal to $0$, there exists no $\delta$-optimal curve over $\mathbb F_q$ of geometric genus $1$ and arithmetic genus $\pi>1$.
\end{remark}

\begin{remark}
We have already  seen (Proposition $\ref{opt}$) that a $\delta$-optimal (singular) curve has necessarily an optimal normalization. Nevertheless this is not in general true for an optimal singular curve. Let us show this fact with an example.

We consider curves over $\mathbb F_2$ of geometric genus $1$ and arithmetic genus $3$ and  we show firstly that $N_2(1,3)=6$. Indeed, we have $N_2(1,3)\geq N_2(1)=5$ by Proposition $\ref{lower}$ and
$N_2(1,3)<7$ by Corollary $\ref{gen1}$. 
Let now $X$ be a smooth curve of genus 1 over $\mathbb F_2$ with exactly $4$ rational points (its existence is assured by Th. 4.1 in \cite{WAT}). It is easy to show that such a curve has $3$ points of degree $2$. Applying  Theorem $\ref{principal}$ with $a_2=2$, we obtain a singular curve $X'$ over $\mathbb F_2$ of geometric genus $1$ and arithmetic genus $3$ with $6$ rational points and $X$ as normalization. So $N_2(1,3)=6$ and $X'$ is an example of an optimal singular curve   whose normalization is not optimal. 

Actually there is no optimal curve over $\mathbb F_2$ of geometric genus $1$ and arithmetic genus $3$ whose normalization is optimal. This is a consequence of the fact that an optimal smooth curve of genus $1$ has neither points of degree $2$ nor points of degree $3$.
\end{remark}

\vskip1cm

Finally we  focus on  the genera and the zeta function of maximal curves which form a particular subclass of $\delta$-optimal curves. 

We recall that for a maximal smooth curve $X$ defined over $\mathbb F_q$, it is known from Ihara in \cite{Iha} that if $q$ is a square the genus $g$ of $X$ verifies $g\leq \frac{q-\sqrt{q}}{2}$  (see also Prop. 5.3.3 of \cite{STI}). 

Moreover in this case ($q$ square), if $\omega_1, {\bar\omega_1},\ldots, \omega_g,{\bar\omega_g}$ are the inverse roots of the numerator  of the zeta function $Z_X(T)$ of $X$, then the maximality of $X$ with the Riemann hypothesis ($\vert \omega_i\vert=\sqrt{q}$) imply that they are all equal to $-\sqrt{q}$. Hence one gets:
$$Z_X(T)=\frac{(1+\sqrt{q}T)^{2g}}{(1-T)(1-qT)}.$$
If $q$ is not a square,  following an idea of Serre in \cite{Ser}, the arithmetic-geometric mean inequality  gives
$$\frac{1}{g}\sum_{i=1}^g(m+1+\omega_i+{\bar\omega_i})\geq \left(\prod_{i=1}^g(m+1+\omega_i+{\bar\omega_i})\right)^{1/g}\geq 1.$$
The maximality of $X$ implies that the arithmetic mean equals the geometric one, hence we find that $\omega_i+{\bar\omega_i}=-m$ for all $1\leq i\leq g$. Thus 
  $$Z_X(T)=\frac{(qT^2+mT+1)^g}{(1-T)(1-qT)}.$$

The following proposition shows what happens analogously for  general (i.e. possibly singular) maximal curves.

\begin{proposition}\label{max}
If $X$ is a maximal curve defined over ${\mathbb F}_q$ of geometric genus $g$, arithmetic genus $\pi$ and zeta function $Z_X(T)$, then we have:
$$\pi\leq g+ \frac{q^2+(2g-1)q-gm(m+1)}{2}$$
and
$$Z_X(T)=\frac{(qT^2+mT+1)^g(1+T)^{\pi-g}}{(1-T)(1-qT)}.$$
\end{proposition}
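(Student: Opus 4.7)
The plan is to reduce the statement to what is already known about smooth maximal curves by passing to the normalization $\tilde{X}$. Since every maximal curve is $\delta$-optimal, Proposition \ref{opt} applies directly: the normalization $\tilde{X}$ is an optimal smooth curve, one has $\pi-g \leq B_2(\tilde{X})$, and moreover $Z_X(T) = Z_{\tilde{X}}(T)(1+T)^{\pi-g}$. Combining the maximality hypothesis $\sharp X(\mathbb{F}_q) = q+1+gm+\pi-g$ with the equality $\sharp X(\mathbb{F}_q) - \sharp \tilde{X}(\mathbb{F}_q) = \pi - g$ (also from Proposition \ref{opt}), we get $\sharp \tilde{X}(\mathbb{F}_q) = q+1+gm$, so $\tilde{X}$ is itself a smooth maximal curve.

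Next, I would invoke the zeta function of a smooth maximal curve recalled just before the proposition. When $q$ is not a square, Serre's argument gives $Z_{\tilde{X}}(T) = \frac{(qT^2+mT+1)^g}{(1-T)(1-qT)}$. When $q$ is a square, $m = 2\sqrt{q}$, and the formula $Z_{\tilde{X}}(T) = \frac{(1+\sqrt{q}T)^{2g}}{(1-T)(1-qT)}$ recalled above agrees with $\frac{(qT^2+mT+1)^g}{(1-T)(1-qT)}$ because $qT^2 + 2\sqrt{q}T + 1 = (1+\sqrt{q}T)^2$. Thus in both cases the same unified expression holds, and multiplying by $(1+T)^{\pi-g}$ yields the claimed formula for $Z_X(T)$.

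For the genus inequality, I would compute $B_2(\tilde{X})$ explicitly using
$$B_2(\tilde{X}) = \frac{\sharp \tilde{X}(\mathbb{F}_{q^2}) - \sharp \tilde{X}(\mathbb{F}_q)}{2}.$$
Since the inverse roots $\omega_i, \bar{\omega}_i$ of the numerator of $Z_{\tilde{X}}(T)$ satisfy $\omega_i + \bar{\omega}_i = -m$ and $\omega_i \bar{\omega}_i = q$, one obtains $\omega_i^2 + \bar{\omega}_i^2 = m^2 - 2q$, hence $\sharp \tilde{X}(\mathbb{F}_{q^2}) = q^2 + 1 - g(m^2 - 2q)$. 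Subtracting $\sharp \tilde{X}(\mathbb{F}_q) = q+1+gm$ and dividing by $2$ gives $B_2(\tilde{X}) = \frac{q^2 + (2g-1)q - gm(m+1)}{2}$. The bound $\pi - g \leq B_2(\tilde{X})$ from Proposition \ref{opt} then yields the stated inequality.

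There is no real obstacle here: the work has essentially been done in Proposition \ref{opt} and in the recalled facts on smooth maximal curves. The only minor subtlety is to check that the square and non-square cases for $q$ are correctly unified by the polynomial $qT^2+mT+1$, which is a direct algebraic identity.
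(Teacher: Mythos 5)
Your proposal is correct and follows essentially the same route as the paper: reduce to the normalization via Proposition \ref{opt}, observe that $\tilde{X}$ must be a smooth maximal curve, compute $B_2(\tilde{X})$ from $\sharp\tilde{X}(\mathbb{F}_{q^2})$ using $\omega+\bar{\omega}=-m$ and $\omega\bar{\omega}=q$, and multiply the known $Z_{\tilde{X}}(T)$ by $(1+T)^{\pi-g}$. You spell out two steps the paper leaves implicit (that $\sharp\tilde{X}(\mathbb{F}_q)=q+1+gm$ follows from maximality of $X$, and that the square and non-square cases unify into $(qT^2+mT+1)^g$), which is a welcome clarification but not a different argument.
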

 
 \begin{proof}
 Let $X$ be a maximal curve  over ${\mathbb F}_q$ of geometric genus $g$ and arithmetic genus $\pi$. By Proposition $\ref{opt}$,  the curve $X$ has a maximal normalization $\tilde{X}$ and  $\pi\leq g+B_2(\tilde{X})$. But the number of points of degree $2$ is the same for all smooth maximal curves of genus $g$.
 Indeed, a smooth maximal curve over $\mathbb F_q$ of genus $g$ has, by definition, $q+1+gm$ rational points and thus
 the reciprocal  roots of the numerator polynomial of its zeta function are  two complex conjugate numbers $\omega$ and $\bar{\omega}$, each one with multiplicity $g$, such that $\omega+\bar{\omega}=-m$ and $\omega\bar{\omega}=q$.  With the same technique as in the proof of Corollary $\ref{gen1}$, we obtain:
$$
 B_2(\tilde{X})=\frac{q^2+(2g-1)q-gm(m+1)}{2}
$$
which gives the desired inequality involving $g$, $\pi$ and $q$.

From Proposition $\ref{opt}$ and the results on the zeta function of smooth maximal curves recalled above, we  find
directly the form of $Z_X(T)$.
 \end{proof}

\vskip1cm

{\it This work has been carried out in the framework of the Labex Archim\`ede (ANR-11-LABX-0033) and of the A*MIDEX project (ANR-11-IDEX-0001-02), funded by the ``Investissements d'Avenir" French Government programme managed by the French National Research Agency (ANR).}

\vskip1cm
\noindent
{\bf References:}
\bigskip

\bibliography{biblio} 
\bibliographystyle{plain} 
 
\end{document}